\newtheorem{theorem}{Theorem}[section]
\newtheorem{lemma}[theorem]{Lemma}
\theoremstyle{definition}
\newcounter{note}
\title{Sparse approximations of  fractional Mat\'ern fields}
\begin{document}
\maketitle

\centerline{\scshape Lassi Roininen,  Sari Lasanen, Mikko Orisp\"a\"a }
\medskip
{\footnotesize
\centerline{University of Oulu, Sodankyl\"a Geophysical Observatory}
 \centerline{T\"ahtel\"antie 62}
   \centerline{ FI-99600 Sodankyl\"a, FINLAND}
} 

\medskip

\centerline{\scshape Simo S\"arkk\"a}
\medskip
{\footnotesize
 \centerline{ Aalto University, Department of Biomedical Engineering and Computational Science}
   \centerline{P.O. Box 12200}
   \centerline{FI-00076 AALTO, FINLAND}
}

\bigskip

 \centerline{(Communicated by the associate editor name)}

\renewcommand{\baselinestretch}{1.2}

\begin{abstract}
We consider a fast approximation method for a solution of a certain stochastic non-local pseudodifferential equation.
This equation defines a Mat\'ern class random field.
The approximation method is based on the spectral compactness of the solution.
We approximate the pseudodifferential operator with a Taylor expansion.
By truncating the expansion, we can construct an approximation with Gaussian Markov random fields.
We show that the solution of the truncated version can be constructed with an over-determined system of stochastic matrix equations with sparse matrices.
We solve the system of equations with a sparse Cholesky decomposition.
We consider the convergence of the discrete approximation of the solution to the continuous one.
Finally numerical examples are given.
\end{abstract}

\sloppy

\section{Introduction}
\label{intro}

We are interested in studying  generalised Gaussian Markov random fields on $\mathbb{R}^d$. 
A typical -- and often studied -- example of a Gaussian Markov random field is the Mat\'ern  field with the covariance function 
\begin{equation}\label{eqn:ACF}
C_{\mathcal X}(x,y) = \frac{  2^{1-\alpha+d/2}   \sigma^2 \ell^{2(\alpha-d/2)}}{(4\pi )^{d/2}  \Gamma(\alpha)}\left(\frac{|x-y|}{\ell^d}\right)^{\alpha-d/2}
K_{\alpha-d/2}\left(\frac{|x-y|}{\ell^d}\right),\quad x,y\in\mathbb{R}^d,
\end{equation}
where $\alpha-d/2>0$, $\sigma^2>0$ and $\ell>0$ are smoothness parameter, scaling factor, and correlation length, respectively. $K_{\alpha-d/2}$ is the modified Bessel function of the second kind and $\Gamma$ is the gamma function.
Studying the generalised Mat\'ern field is equivalent to the study of the weak solution of the stochastic partial differential equation
\begin{equation}\label{yksi}
\left(-\Delta + \kappa^2  \right)^\frac{\alpha}{2} \mathcal {X}  = \mathcal {W},
\end{equation}
where $\kappa=\ell^{-d}$ and $\mathcal {W}$ is  white noise on $\mathbb R^d$ with a covariance operator $\sigma^2 I$, where  $I$ is the identity operator \cite{Lindgren2011,Rasmussen2006,Roininen2014,Simpson2009}.
For  integer $\alpha$, fast numerical approximations of (\ref{yksi}) are well-known, see for example Lindgren et al. 2011 \cite{Lindgren2011} or Simpson 2009 \cite{Simpson2009}.  
However, an open question is how to efficiently  approximate  Mat\'ern fields with non-integer $\alpha$. 
Our work contributes to this area. 
The case of non-integer alpha was also briefly considered on page 493 of the discussion part of Lindgren et al. \cite{Lindgren2011}, where the proposed approximation is based on minimising an error functional in spectral domain. 
Although that approach, in principle, contains the Taylor series expansion as a special case, our approach differs both in the used discretisation method as well as in the respect that we formally show when the Taylor approximation lead to a valid non-degenerate covariance function.

Instead of  the continuous Mat\'ern fields, we focus on a band-limited version of the Mat\'ern fields, that is, we make a spectral truncation.
In order to make the spectral truncation, we replace the white noise $\mathcal W$ in Equation \eqref{yksi} with a spectrally truncated noise  $W$, which has a covariance function 
\begin{equation*}
C_{W }(x,y)= \frac{\sigma^2}{(2\pi)^d}\int _{|\xi|\leq \kappa}  \exp\left(-i(x-y)\cdot \xi \right) d\xi.
\end{equation*}
The corresponding stochastic partial differential equation is then
\begin{equation}\label{kaksi}
\left(-\Delta +\kappa^2\right)^\frac{\alpha}{2} X  = W,
\end{equation}
where $\alpha$ is non-integer. 
We call the solution $X$  of \eqref{kaksi} \textit{band-limited fractional Mat\'ern field}, because the covariance function of $X$ is  
\begin{equation} \label{eqn:spectral_truncated}
C_{X }(x,y)= \frac{\sigma^2}{(2\pi)^d}\int _{|\xi|\leq \kappa}  \frac{\exp\left(-i(x-y)\cdot \xi \right)}{(\kappa^2 +|\xi|^2)^{\alpha}}  d\xi.
\end{equation}
We choose the spectral truncation rule from the radius of convergence of power series expansion for the Fourier transformed  operator
$$
\mathcal F (  -\Delta +\kappa^2)^{-\alpha} (\xi), 
$$ 
which,  we  later verify, is $\kappa$.
This truncation makes the  sample paths   infinitely smooth with probability one, as we  will  also show later.

For practical computations, we may use any discretisation scheme, such as finite differences \cite{Roininen2011} or finite element methods \cite{Roininen2014}. 
From now on, we use finite differences as our discretisation scheme, because the discretised formulas are simpler than the ones obtained via finite element methods. 
In the case of the integer $\alpha$, the finite difference approximation of the Equation \eqref{yksi} leads to a sparse matrix presentation of the corresponding Mat\'ern field \cite{Lindgren2011,Roininen2014}.
The approximation is typically written as a linear stochastic matrix equation
\begin{equation} \label{eqn:prior_basicmodel}
 \mathbf{ L X} = \mathbf{W},
\end{equation}
where $\mathbf{L}$ is a sparse matrix approximating the linear operator in Equation \eqref{yksi} and $\mathbf{W}$ is discrete white noise.
The covariance of the discrete random field $\mathbf{X}$ is then 
\begin{equation} \label{eqn:covariance}
\mathbf{C} =   \left(\mathbf{ L}^T \mathbf{ L}\right)^{-1}.
\end{equation}
We note that the covariance matrix $\mathbf{C}$ is  a full matrix when $\alpha>0$, while the  precision matrix $\mathbf{ L}^T \mathbf{ L}$ is a sparse matrix.
Hence, it is appealing from the computational point of view to work with formulations of type \eqref{eqn:prior_basicmodel} rather than with the full covariance matrices.
For non-integer $\alpha$ and fractional-order difference approximations \cite{Lasanen2005,Lubich1986}, the matrix $\mathbf{L}$ is  a full matrix, hence computational efficiency is lost.
Thus, the question is raised of whether it is possible to find fast approximation, which is close enough to the original $\mathbf{L}$.
This paper aims to address this question. 
We aim to do this by studying the approximations of  certain random fields closely related to Mat\'ern fields  with power spectrum defined by truncated Taylor expansions and their numerical approximations.

Our main motivation for studying band-limited fractional Mat\'ern fields is in applying them as prior distributions in Bayesian statistical inverse problems \cite{Kaipio2005}. 
In our earlier studies, we have considered Gaussian Markov random fields within the framework of Bayesian statistical inverse problems (Roininen et al. 2011 and 2013 \cite{Roininen2011,Roininen2013a}) and applied the methodology to an electrical impedance tomography problem (Roininen et al. 2014 \cite{Roininen2014}). 
Studies of very high dimensional prior distributions arising from spatially sampled values  of random fields in Bayesian inversion are reported by Lasanen 2012  \cite{Lasanen2012a} and Stuart 2010 \cite{Stuart2010}. 
In S\"arkk\"a et al. 2013 \cite{Sarkka2013} and Solin et al. 2013 \cite{Solin2013} we also applied Mat\'ern and other types of spatio-temporal Gaussian random fields to fMRI brain imaging and prediction of local precipitation, and in Hiltunen et al. 2011 \cite{Hiltunen2011} to diffuse optical tomography.
Other applications of Mat\'ern fields include for example spatial interpolation \cite{Lindgren2011} and machine learning \cite{Rasmussen2006}.

This paper is organised as follows:
In Section \ref{sec:corrpri}, we consider the  approximation of the fractional spectrum with truncated Taylor expansion and discuss corresponding discrete approximations with sparse matrices.
In Section \ref{sec:QR}, we construct  upper triangular matrix  $\mathbf{L}$ (see Equation \eqref{eqn:prior_basicmodel}) with Cholesky decomposition. 
In Section \ref{sec:bl}, we further consider  Taylor expansion of power spectrum in more detail.
The convergence of the discrete approximations to the continuous ones will be considered in Section \ref{sec:convergence}.
Finally, in Section \ref{sec:experiments}, we numerically study the accuracy of the approximation in the case of 2-dimensional Mat\'ern field.

\section{Approximating band-limited covariances}
\label{sec:corrpri}

Our aim, in this section, is to  study approximations of band-limited Mat\'ern fields \eqref{eqn:spectral_truncated} in two steps: First we approximate the fractional spectrum $(\kappa^2 +|\xi|^2)^{\alpha}$ with truncated Taylor series. 
Then we study discrete approximations of the corresponding random field via trigonometric polynomials, which lead to matrix covariance formulas of type \eqref{eqn:covariance}.

Let us denote
\begin{equation} \label{eqn:Pt}
P(t):= \left(\kappa^2+ t ^2\right)^{\alpha}, 
\end{equation} 
where $t\in\mathbb R$. 
The function $P$ has the well-known Taylor series 
\begin{equation}\label{tayl}
\left(\kappa^2+ t ^2\right)^\alpha = \sum_{k=0}^\infty a_k \kappa^{2 (\alpha-k)} t ^{2k},
\end{equation}
where 
\begin{equation}\label{eq:coefficients}
\begin{split}
a_0&=1, \\
a_k&= \frac{\alpha (\alpha -1)\cdots (\alpha -k+1)}{k!}\quad\text{for}~~k\geq1.
\end{split}
\end{equation}
We note that the series \eqref{tayl} converges for $ |t| \leq \kappa $ and   diverges  for $|t|>\kappa$. 
In Section \ref{sec:bl}, we verify that the divergence is due to unlimitedness of the partial sums. 

We apply the Taylor series \eqref{tayl}  to the covariance function in Equation \eqref{eqn:spectral_truncated} and obtain
\begin{equation}
\begin{split}
C_{X }(x,y)&= \frac{\sigma^2}{(2\pi)^d}\int _{|\xi|\leq \kappa}  \frac{\exp\left(-i\left(x-y\right)\cdot \xi \right)}{ P(|\xi|)}  d\xi  \\
&= \frac{\sigma^2}{(2\pi)^d}\int_{|\xi|\leq \kappa}   \frac{ \exp\left(-i\left(x-y\right)\cdot\xi \right)}{ \sum_{k=0}^\infty a_k \kappa^{2 (\alpha-k)} |\xi| ^{2k}  } d\xi.  \label{series}
\end{split}
\end{equation}
As our objective is to find a Gaussian Markov random field approximation, we  truncate the Taylor series in \eqref{series}, and set
\begin{equation} \label{eqn:taylor_trunc}
C_{X}^{K} (x,y) = \frac{\sigma^2}{(2\pi)^d} \int    \frac{ \exp\left(-i\left(x-y\right)\cdot\xi \right)}{ \sum_{k=0}^K a_k \kappa^{2 (\alpha-k)} |\xi| ^{2k}  } d\xi. 
\end{equation}
We choose the truncation level $K$ in such a way that $a_K>0$. 
This guarantees the positivity of the denominator (see Section \ref{sec:bl} for detailed discussion).

The band-limited spectral density in Equation \eqref{eqn:spectral_truncated} as such can result in quite large differences to the covariance function due to the missing tails. 
However, it often turns out that the truncated series in Equation \eqref{eqn:taylor_trunc} is positive in a considerably larger area $|\xi|\leq \kappa'$ with $\kappa' > \kappa$ even though the Taylor series converges only in $|\xi|\leq \kappa$. 
Quite often it is even valid in the whole $\mathbb{R}^d$.
In those cases, by extending the integration area as done in Equation \eqref{eqn:taylor_trunc}, we can better retain the tails of the spectral density which leads
to a considerably more accurate approximation to the covariance function.

As an example, we choose $d=1$, $\sigma^2 = 1$, $\alpha=3/2$ and $\kappa=1$ with truncation parameter $K=4$, and set
\begin{equation} \label{eqn:taylor_approx}
P(t) =1+ \frac{3}{2}t^2   + \frac{3}{8}t^4 - \frac{1}{16} t^6 + \frac{3}{128} t^8.
\end{equation}
The polynomial is clearly everywhere positive and hence the spectral density is valid in the whole $\mathbb{R}$.
Thus we can extend the integration area to the whole space. 
Figure~\ref{fig:trunc_1d} illustrates the resulting approximation.
The general case is studied in Lemma \eqref{raj} (Section \ref{sec:bl}).

\begin{figure}[htp]
\begin{center}
  \subfigure[Plain band-limited]{\includegraphics[width=0.49\textwidth]{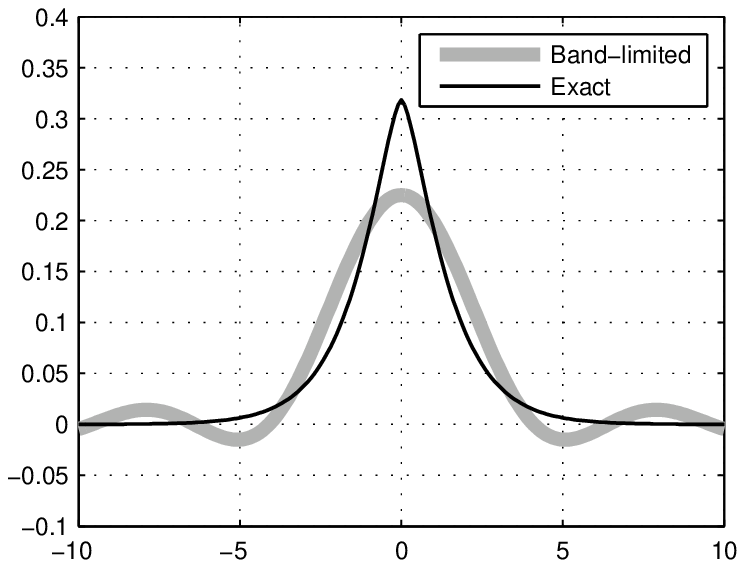}}
  \subfigure[Taylor expansion]{\includegraphics[width=0.49\textwidth]{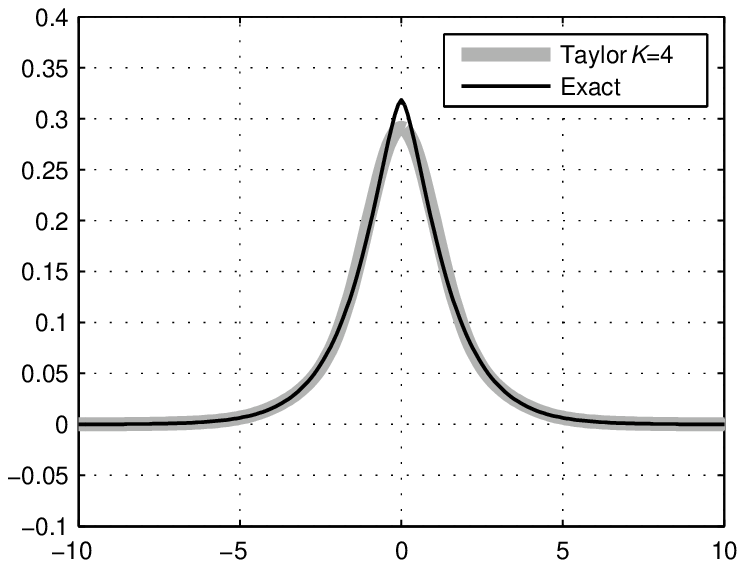}}
  \caption{(a) Covariance function of a band-limited approximation to one-dimensional Mat\'ern spectral density. (b) Covariance function of a $4^{(\mathrm{th})}$ order Taylor series expansion. Although the plain band-limited approximation is quite inaccurate, the truncated Taylor series on the whole $\mathbb{R}$ is quite accurate.
  } \label{fig:trunc_1d}
  \end{center}
\end{figure}

We give the discrete approximation on lattice $h(\mathbf {i},\mathbf {j})^d$, where $h>0$ is discretisation step and $\mathbf {i},\mathbf {j}\in\mathbb{Z}^d$.
Then the discrete approximation of the continuous covariance \eqref{eqn:taylor_trunc} can be written with the discrete Fourier transform and trigonometric polynomials \cite{Roininen2011} as
\begin{equation} \label{eqn:approx_disc_d}
C(\mathbf {i},\mathbf {j}) = \frac{\sigma^2}{(2\pi)^d} \int_{(-\pi,\pi)^d} \frac{\exp\left(-i(\mathbf {i}-\mathbf {j})\cdot\xi\right)}{\sum_{k=0}^Ka_k \kappa^{2 (\alpha-k)}h^{d-2k}  \left(\sum_{p=1}^d(2-2\cos(\xi_p))^{k}\right)}d\xi.
\end{equation}
We emphasise that the integrand is not band-limited to $\vert \xi \vert \leq \kappa$, because of the approximations applied.

For the relationship between polynomials corresponding to continuous covariance \eqref{eqn:taylor_trunc} and trigonometric polynomials corresponding to discrete covariance \eqref{eqn:approx_disc_d}, see Section \ref{sec:convergence}.
The difference between our earlier study \cite{Roininen2011} and this paper, is that here we let the terms  $c_k:=a_k \kappa^{2 (\alpha-k)}$ to be also negative.
However, as mentioned earlier, we require that the sum is strictly positive, as is the case in the formulation in Equation \eqref{eqn:taylor_approx}.
In Section \ref{sec:QR}, we will consider a technique for constructing matrix $\mathbf{L}$ in Equation \eqref{eqn:prior_basicmodel}  for the cases $c_k\in \mathbb{R}$.

Studying  the trigonometric polynomial
\begin{equation}
2-2\cos(\xi_p) = |1-\exp(i\xi_p)|^2
\end{equation}
in Equation \eqref{eqn:approx_disc_d} is related to the study of difference matrices \cite{Roininen2011}.
For example, let us choose $d=1$.
Then  we can write a stochastic first order difference matrix equation  as
\begin{equation} \label{eqn:L1_matrix}
\mathbf{L}_1\mathbf{X} = \frac{ \delta_0 - \delta_{-1}}{h} * \mathbf{X} = \mathbf{W}  \Leftrightarrow  \frac{1-\exp(i\xi)}{h} \mathcal{F}(\mathbf{X}) = \mathcal{F}(\mathbf{W}),
\end{equation}
where $\delta$ is the Kronecker delta, white noise $\mathbf{W}$ has covariance  $\Sigma_1=\frac{1}{hc_1}\mathbf{I}$.
We can write  $k^{(\mathrm{th})}$-order difference matrix $\mathbf{L}_k$ similarly to Equation \eqref{eqn:L1_matrix}.
The corresponding covariance matrices $\Sigma_k$ are obtained from the constants $c_kh^{d-2k}$ in Equation \eqref{eqn:approx_disc_d} and they are 
$\Sigma_k=\frac{h^{2k-1}}{c_k}\mathbf{I}$. 
Using the additivity property of the precision matrix \cite{Roininen2011}, we may then write the discrete covariance with matrix equations as 
\begin{equation} \label{eqn:covarianssi_koko}
\mathbf{C}  = \left(\mathbf{L}^T\mathbf{L}\right)^{-1}=\left( \sum_{k=0}^K \mathbf{L}_k^T \Sigma_k^{-1} \mathbf{L}_k\right)^{-1} = \left( \sum_{k=0}^K a_k \kappa^{2 (\alpha-k)} h^{1-2k}\mathbf{L}_k^T \mathbf{L}_k\right)^{-1}. 
\end{equation}

\section{Cholesky decomposition}
\label{sec:QR}

Given the matrices $\mathbf{L}_k$ and $\Sigma_K$ in Equation \eqref{eqn:covarianssi_koko}, our aim is to construct an upper triangular sparse matrix $\mathbf{L}$.
The full covariance matrix $\mathbf{C}$ in Equation \eqref{eqn:covarianssi_koko}, has both $c_k>0$ and $c_k<0$ terms, which we relate to constructing $\mathbf{L}$ with Cholesky  decomposition. 
We choose this construction, as we aim to construct the upper triangular matrix $\mathbf{L}$ term-by-term,  that is, we recursively  apply 
the Cholesky decomposition in order to get the wanted presentation.
Cholesky decomposition  
algorithms are covered in standard literature \cite{Golub1996}, and they are extensively used in square root Kalman filtering (see, e.g., \cite{Bierman1977,Grewal2008}).

An alternative to Cholesky decomposition is the QR decomposition with Givens rotations and anti-rotations \cite{Orispaa2010}.
We note that mathematically, from the perspective of this paper, Cholesky and QR methods are equivalent.

In the computation of estimators to inverse problems with prior covariance $\mathbf{C}$ as well as in simulation of the random field we are interested in performing matrix-vector operators of the form $\mathbf{L}^{-1} \, \mathbf{v}$, where $\mathbf{v}$ is some given vector. 
When $\mathbf{L}$ is a sparse matrix, this can be efficiently evaluated without explicitly computing the (full) matrix inverse $\mathbf{L}^{-1}$. 
Although the matrix $\mathbf{L}$ can be computed via factoring $\mathbf{C}^{-1}$, for maximal numerical accuracy it beneficial to compute it directly without computing $\mathbf{C}^{-1}$. 
This is because the number of bits required for a given floating point precision for constructing $\mathbf{C}^{-1}$ is twice the required bits for $\mathbf{L}$ \cite{Bierman1977}.

We start by partitioning the precision matrix  $\mathbf{C}^{-1}$ as 
\begin{equation}
\begin{split}
 \mathbf{P}_{+} - \mathbf{P}_{-} :=  \sum_{a_k\geq 0}  a_k \kappa^{2 (\alpha-k)} h^{1-2k}\mathbf{L}_k^T \mathbf{L}_k - \sum_{a_k< 0}  \vert a_k \kappa^{2 (\alpha-k)} \vert h^{1-2k}\mathbf{L}_k^T \mathbf{L}_k, 
\end{split}
\end{equation}
where the partitioned precision matrices $\mathbf{P}_{+}$ and $\mathbf{P}_{-}$ correspond to the parts to be  sequentially updated with positive and negative signs, respectively.
When making the Cholesky decomposition,  we first loop over the positive $c_k$ coefficients and do Cholesky updates with $\sqrt{a_k \kappa^{2 (\alpha-k)} h^{1-2k}} \, \mathbf{L}_k$. 
Then we do the same for the negative coefficients with the so-called Cholesky downdates.
We note that it advisable not to mix the updates with positive  and negative signs, because this might break the positive-definiteness  property of the covariance matrix. 
This might break the algorithm and hence, we propose to carry out updates with positive signs first and downdates with negative signs at the last part of the algorithm.

A further development of the matrix factorisation is to use the $\mathbf{L}^T\mathbf{DL}$ decomposition, where $\mathbf{D}$ is a diagonal matrix and the diagonal elements of the $\mathbf{L}$ are all ones. 
Hence, this allows the presentation of the form
\begin{equation}
\mathbf{LX} = \mathbf{W} \sim \mathcal{N}(0,\mathbf{D}^{-1}).
\end{equation}
The inverse $\mathbf{D}^{-1}$ is fast to compute as it is a diagonal matrix.
Figure~\ref{fig:chol_1d} shows an example of a covariance function approximation formed with the above procedure (using the SuiteSparse\footnote{For SuiteSparse software package, see http://faculty.cse.tamu.edu/davis/suitesparse.html.} library \cite{Davis2011,Foster2013}) as well as example realisations of the process.

\begin{figure}[htp]
\begin{center}
  \subfigure[Approximation]{\includegraphics[width=0.49\textwidth]{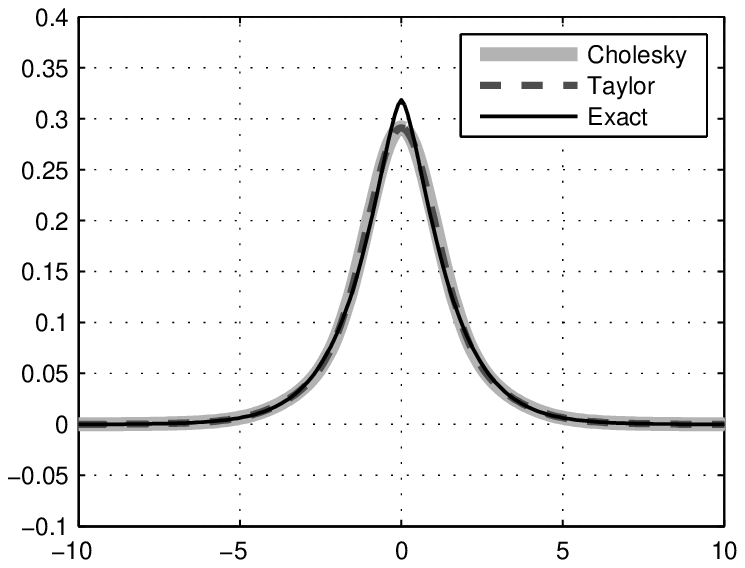}}
  \subfigure[Realisations]{\includegraphics[width=0.49\textwidth]{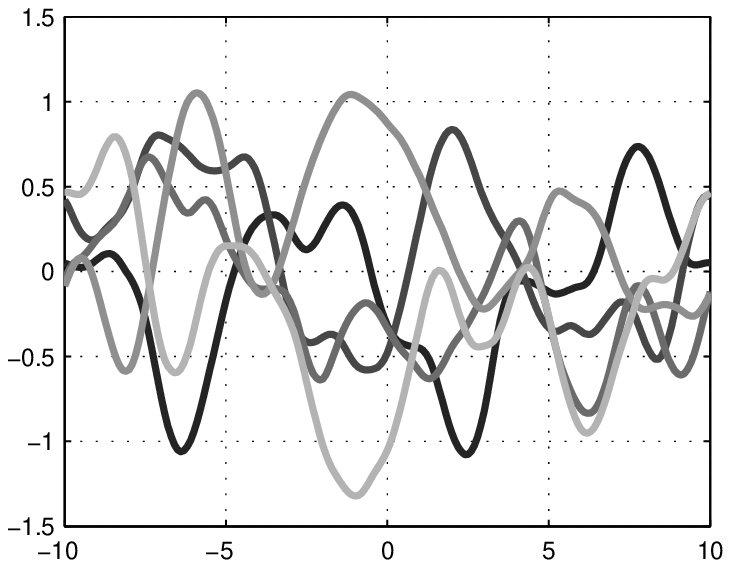}}
  \caption{(a) Exact covariance function of the one-dimensional example of Equation \eqref{eqn:taylor_approx} ($\sigma^2 = 1$, $\alpha=3/2$, $\kappa=1$, $K=4$), the truncated Taylor series approximation and its finite-difference approximation with discretisation step $h = 0.1$. In the finite-difference computations, we have used periodic boundary conditions in an extended domain and cropped the image. (b) Realisations from the process simulated via the discretised approximation.
  } \label{fig:chol_1d}
  \end{center}
\end{figure}

\section{Band-limited fractional Mat\'ern fields}
\label{sec:bl}

In this section, we  discuss the power spectrum of the band-limited Mat\'ern fields with certain expansion schemes.
We first note a fundamental property of the band-limited Mat\'ern fields:
\begin{lemma}\label{smooth}
Let $X$ be the solution of \eqref{kaksi}. Then 
the sample paths of $X$ are smooth on $\mathbb R^d$ with 
probability 1.
\end{lemma}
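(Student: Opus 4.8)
The plan is to exploit the band-limited (compactly supported) spectral density of $X$ and to deduce $C^\infty$ regularity of the sample paths from the finiteness of all spectral moments, combined with the Kolmogorov continuity theorem applied to the formal derivative fields. First I would write down the spectral (Cram\'er) representation of the stationary Gaussian field $X$ determined by \eqref{eqn:spectral_truncated},
$$X(x) = \int_{|\xi|\leq\kappa} \frac{1}{(\kappa^2+|\xi|^2)^{\alpha/2}}\, e^{-i x\cdot\xi}\, dZ(\xi),$$
where $Z$ is the orthogonal Gaussian random measure with control measure $\frac{\sigma^2}{(2\pi)^d}\,d\xi$ restricted to the ball $|\xi|\leq\kappa$. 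The crucial structural fact is that the spectral density
$$f(\xi) = \frac{\sigma^2}{(2\pi)^d (\kappa^2+|\xi|^2)^\alpha}\, \mathbf{1}_{\{|\xi|\leq\kappa\}}$$
is supported on a bounded set, so that $\int_{\mathbb{R}^d} |\xi|^{2n} f(\xi)\, d\xi < \infty$ for every $n$.

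Next I would establish mean-square differentiability of all orders. For a multi-index $\beta$, the finiteness of the moment $\int (\prod_p \xi_p^{2\beta_p}) f(\xi)\, d\xi$ guarantees that $X$ possesses a mean-square partial derivative $\partial^\beta X$, which is again a stationary Gaussian field, now with spectral density $(\prod_p \xi_p^{2\beta_p}) f(\xi)$ --- still compactly supported. Equivalently, all partial derivatives of the covariance function $C_X$ exist and are continuous, since differentiation under the integral sign in \eqref{eqn:spectral_truncated} is justified on the bounded integration domain.

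To upgrade mean-square smoothness to almost-sure smoothness, I would apply the Kolmogorov continuity theorem to each derivative field. The key estimate is the mean-square Lipschitz bound
$$\mathbb{E}\, |\partial^\beta X(x) - \partial^\beta X(y)|^2 = 2\int (\prod_p \xi_p^{2\beta_p}) f(\xi)\,(1-\cos((x-y)\cdot\xi))\, d\xi \leq C_\beta\, |x-y|^2,$$
obtained from $1-\cos\theta \leq \theta^2/2$ and the finiteness of the relevant spectral moment. Because $\partial^\beta X$ is Gaussian, this controls all moments of the increments, so Kolmogorov's theorem yields a (H\"older) continuous modification of each $\partial^\beta X$.

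Finally I would glue these modifications together. Working on the single probability-one event on which the countably many continuous modifications $\{\partial^\beta X\}_\beta$ all exist, I would verify by the standard mean-square/almost-sure identification argument that the continuous modification of $\partial^\beta X$ is in fact the genuine partial derivative of the modification of $X$; iterating over $|\beta|$ then shows the modification is $C^\infty$ almost surely. The main obstacle is precisely this last identification step: mean-square differentiability does not by itself give path-wise differentiability, so care is needed to show that the Kolmogorov modifications are mutually consistent and that the almost-sure limits of difference quotients coincide with the modified derivative fields on a common event of full measure. One could alternatively invoke the stronger classical fact that a compactly supported spectral measure forces $C_X$ to extend to an entire function and the sample paths to be real-analytic, but the $C^\infty$ conclusion of the lemma follows already from the argument above.
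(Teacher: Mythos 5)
Your proof is correct, but it takes a genuinely different route from the paper's, which is a three-line citation argument: the paper observes that the spectral density of each weak derivative $\partial^k X/\partial x_j^k$, namely $S(\xi)=\sigma^2|\xi_j|^{2k}(\kappa^2+|\xi|^2)^{-\alpha}1_{|\xi|\leq\kappa}(\xi)$, trivially satisfies the logarithmic moment condition $\int S(\xi)(\log(1+|\xi|))^{1+\epsilon}\,d\xi<\infty$ (it is bounded with compact support), and then invokes Theorem 3.4.3 of \cite{Adler1981} to conclude that all weak derivatives are almost surely continuous, leaving the passage from a.s.\ continuous weak derivatives to smooth sample paths implicit. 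You instead run the elementary Kolmogorov--Chentsov route: compact spectral support gives finiteness of every spectral moment, hence mean-square derivatives of all orders with the increment bound $\mathbb{E}\,|\partial^\beta X(x)-\partial^\beta X(y)|^2\leq C_\beta|x-y|^2$, which by Gaussianity controls all moments of the increments and yields H\"older-continuous modifications of each derivative field, after which you glue the countably many modifications on a common full-measure event and identify them with genuine pathwise derivatives. Your version is longer but self-contained and quantitatively stronger (it gives H\"older regularity of every derivative with exponent arbitrarily close to $1$), and, to your credit, you explicitly name and sketch the mean-square/almost-sure identification step that the paper's proof passes over silently --- that step is standard (compare both sides of $Y_\beta(x+te_j)-Y_\beta(x)=\int_0^t Y_{\beta+e_j}(x+se_j)\,ds$ in $L^2$ and use joint continuity) but it is genuinely needed in either approach. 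What the paper's route buys in exchange is brevity and a near-optimal continuity criterion: the log-moment condition is vastly weaker than your Lipschitz bound, though the band-limitation makes both available for free here. Your closing remark is also accurate: compact spectral support in fact forces real-analytic paths by a Paley--Wiener argument, so the lemma's $C^\infty$ conclusion is not sharp.
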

\begin{proof}
For any $k\in\mathbb N$ and $j=1,\cdots, d$, the spectra $$S(\xi)= \sigma^2   \frac{|\xi_j|^{2k} }{(\kappa^2+ |\xi|^2)^\alpha }1_{|\xi|\leq \kappa} (\xi) $$ of the weak derivatives  $\frac{\partial^k  X}{\partial x_j^k}$ satisfy the condition  
 \begin{equation}
\int S(\xi) (\log(1+|\xi|))^{1+\epsilon} d\xi <\infty
 \end{equation}
 for  fixed  $\epsilon>0$.   By  Theorem 3.4.3 in \cite{Adler1981}, 
the weak derivatives  are almost surely continuous. 
\end{proof}

\subsection{Convergence of the truncated Taylor series} 

In order to study the convergence of the truncated Taylor approximations \eqref{eqn:taylor_trunc}, we need a preliminary result:
\begin{lemma}\label{raj}
Let    $K=[\alpha]+2J+1$, where  integer $J\geq1$ and
 $[\alpha]$ is the integer part of $\alpha> \frac{d}{2}$.  Let $\kappa>0$ and 
coefficients $a_k$  be  as in \eqref{eq:coefficients}.
Then there is a constant $c>0$ independent of $J$ such that 
\begin{equation}\label{partial}
c(1 +|\xi|^{d+2})  \leq \sum_{k=0}^K a_k \kappa^{2(\alpha- k)} |\xi| ^{2k}  
\end{equation}
for all $ \xi \in\mathbb R^d$. Moreover,  there are non-negative polynomials 
$q_j$,  $j=1,\dots,J$, such that 
\begin{equation}
\sum_{j=1}^J q_j(|\xi|)  \leq \sum_{k=0}^K a_k \kappa^{2(\alpha- k)} |\xi| ^{2k},  
\end{equation}
for every $K$, and 
\begin{equation}\label{eq:limit}
\lim_{J\rightarrow \infty}   \sum_{j=1}^J q_j(t) = +\infty
\end{equation}
for $t>1$. 
\end{lemma}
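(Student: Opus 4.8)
The plan is to exploit the sign pattern of the binomial coefficients $a_k=\binom{\alpha}{k}$ together with a pairing of consecutive high-order terms. Writing $P_K(t)=\sum_{k=0}^K a_k\kappa^{2(\alpha-k)}t^{2k}$ and recalling that $\alpha>d/2$ is non-integer, one first records that $a_k>0$ for $0\le k\le[\alpha]+1$, while for $k\ge[\alpha]+2$ the sign is $(-1)^{k-[\alpha]-1}$; in particular, for $K=[\alpha]+2J+1$ the number of negative factors is $2J$, so $a_K>0$, exactly as the truncation rule demands. I would then split $P_K=P_0+\sum_{j=1}^J\pi_j$, where $P_0(t)=\sum_{k=0}^{[\alpha]+1}a_k\kappa^{2(\alpha-k)}t^{2k}$ has strictly positive coefficients and each $\pi_j$ is the pair formed by the (negative) $k=[\alpha]+2j$ term and the (positive) $k=[\alpha]+2j+1$ term. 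Factoring out the lower power gives
\begin{equation*}
\pi_j(t)=c_j\,t^{2([\alpha]+2j)}\bigl(t^2-\tau_j^2\bigr),\qquad c_j>0,\quad \tau_j=\kappa\sqrt{\tfrac{[\alpha]+2j+1}{2j-\{\alpha\}}},
\end{equation*}
where $\{\alpha\}$ is the fractional part. The two facts I would extract are: $\tau_j\downarrow\kappa$ with $\tau_1=\max_j\tau_j$ finite and independent of $J$ (so $\pi_j\ge0$ whenever $t\ge\tau_1$), and, for fixed $t>\kappa$, $\pi_j(t)\to+\infty$ because the exponential growth of $t^{2([\alpha]+2j)}$ beats the decay $c_j\sim j^{-(\alpha+1)}$ of the binomial coefficients.

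For the lower bound \eqref{partial} I would argue on three regions. On $0\le t\le\kappa$ the omitted tail $\sum_{k>K}$ is an alternating series with decreasing terms whose first term is negative, so $P_K(t)\ge(\kappa^2+t^2)^\alpha\ge\kappa^{2\alpha}$, which dominates $c(1+t^{d+2})$ on this bounded set. On $t\ge\tau_1$ every $\pi_j\ge0$, so $P_K$ is non-decreasing in $J$ and $P_K(t)\ge P_0(t)+\pi_1(t)=:\Phi(t)$, a fixed positive polynomial of degree $2([\alpha]+3)$; since $\alpha>d/2$ forces $2([\alpha]+1)\ge d+1$, this degree exceeds $d+2$, and a positive polynomial growing faster than $t^{d+2}$ is bounded below by $c(1+t^{d+2})$ there. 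The remaining annulus $\kappa\le t\le\tau_1$ is compact and $J$-independent; using that $\pi_j\ge0$ for $j>m$ once $t\ge\tau_{m+1}$, uniform positivity reduces to positivity of the finitely many fixed partial sums $P_{[\alpha]+2m+1}$, which at $t=\kappa$ equal $\kappa^{2\alpha}\sum_{k=0}^{[\alpha]+2m+1}\binom{\alpha}{k}>\kappa^{2\alpha}2^\alpha$ by the same alternating-tail estimate, transferred to nearby $t$ by a uniform bound on $P_{[\alpha]+2m+1}'$ near $\kappa$ (again an alternating series). Taking the minimum of the three constants yields $c>0$ independent of $J$.

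For the non-negative polynomials $q_j$ and the divergence, I would build $q_j$ from the pair $\pi_j$ by correcting its single sign change: the only obstruction to $\pi_j\ge0$ is the dip on $[0,\tau_j]$, so I would replace the linear factor $(t^2-\tau_j^2)$ by a manifestly non-negative envelope chosen so that $q_j\ge0$ everywhere and $q_j\le\pi_j$ on $[\tau_j,\infty)$. Keeping $\sum_{j=1}^J q_j\le P_K$ then amounts to checking that the total mass of the corrections on the dips is absorbed by the positive reservoir $P_0$; here the decisive input is $\sum_j|a_{[\alpha]+2j}|<\infty$ (equivalently $\alpha>0$) together with $\tau_j\downarrow\kappa$, which keeps the normalised thresholds $s_j^\ast=\tau_j^2/\kappa^2\downarrow1$ and hence $(s_j^\ast)^{\,j}$ bounded. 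Divergence \eqref{eq:limit} for $t>\kappa$ (the stated $t>1$ after the normalisation $t\mapsto t/\kappa$) is then immediate from $\pi_j(t)\to+\infty$.

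The step I expect to be the main obstacle is making the $q_j$ genuinely non-negative while preserving $\sum q_j\le P_K$: the pairs are only eventually positive, so absorbing the finite but $t$-dependent negative mass into $P_0$ must be done with care. It is precisely the uniform control on the compact annulus $\kappa\le t\le\tau_1$ — equivalently the uniform positivity of the truncated binomial partial sums together with the uniform derivative bound near $t=\kappa$ — that simultaneously makes the lower-bound constant $c$ independent of $J$ and the construction of the $q_j$ go through.
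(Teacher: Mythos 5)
Your sign analysis, the formula $\tau_j=\kappa\sqrt{([\alpha]+2j+1)/(2j-\{\alpha\})}$ with $\tau_j\downarrow\kappa$, and your divergence mechanism are all sound — indeed the paper proves \eqref{eq:limit} by exactly your observation, computing that the ratio of consecutive blocks tends to $t^2>1$. But there is a genuine gap at the crux of the lemma, and you have located it yourself: your two-term blocks $\pi_j$ are strictly negative on the dip $[0,\tau_j)$, so they are not admissible as the $q_j$, and the proposed repair — replace $(t^2-\tau_j^2)$ by a non-negative envelope and absorb the excess into $P_0$ — is never actually constructed and fails as sketched. Admissibility requires the pointwise budget inequality $\sum_{j\geq 1}\bigl(q_j(t)-\pi_j(t)\bigr)\leq P_0(t)$ on the dip region (note also that the lemma demands $q_j$ independent of $J$, since $\sum_{j=1}^J q_j\leq P_{K_J}$ must hold for \emph{every} $K$); with the natural envelope, e.g.\ $(t-\tau_j)^2\leq t^2-\tau_j^2$ for $t\geq\tau_j$, the excess of a single block is of size $2c_j\tau_j^{2([\alpha]+2j)+2}$, where $\tau_j^{4j}$ tends to a positive constant. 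Summability of $c_j\sim j^{-(\alpha+1)}$ makes the total finite, but nothing forces it below $\min_{t\le\tau_1}P_0(t)$; with $\kappa=1$ normalisation and $\alpha$ close to $d/2$, already the first block's excess can exceed the reservoir. The idea you are missing is the paper's: do not absorb the dip, \emph{remove} it, by splitting each positive coefficient in half and sharing it between the two adjacent negative terms,
\begin{equation*}
q_j(t)=\tfrac12\, a_{[\alpha]+2j-1}\,t^{2([\alpha]+2j-1)}+a_{[\alpha]+2j}\,t^{2([\alpha]+2j)}+\tfrac12\, a_{[\alpha]+2j+1}\,t^{2([\alpha]+2j+1)}.
\end{equation*}
The quadratic factor in $s=t^2$ then has negative discriminant, proportional to $(\alpha-[\alpha]-2j+1)(\alpha+1)<0$, so each $q_j$ is non-negative on all of $\mathbb{R}$; the decomposition $\sum_{k=0}^{K}a_k\kappa^{2(\alpha-k)}|\xi|^{2k}=\kappa^{2\alpha}\sum_{j=0}^{J}q_j(\kappa^{-1}|\xi|)+\tfrac12 a_K\kappa^{2\alpha}(\kappa^{-1}|\xi|)^{2K}$ is exact with no correction terms, and \eqref{partial} follows from $q_0$ alone, for all $\xi$ at once.

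Your three-region proof of \eqref{partial} also has a hole on the annulus $\kappa\leq t\leq\tau_1$. The alternating-tail bound $P_K(t)\geq(\kappa^2+t^2)^\alpha$ is valid only for $t\leq\kappa$: for any fixed $t>\kappa$ the omitted tail eventually has increasing magnitudes (the full series diverges there), so it cannot be extended past $\kappa$. Moreover the reduction is not to \emph{finitely} many partial sums: since $\tau_m\downarrow\kappa$, covering the annulus by the intervals $[\tau_{m+1},\tau_m]$ requires all $m$, and on each you still need a uniform positive lower bound for the fixed partial sum $P_{[\alpha]+2m+1}$ in a region where some of its own blocks $\pi_j$, $j\leq m$, are negative — which is essentially the statement being proved, so the argument is circular as it stands; the derivative transfer from $t=\kappa$ is only local and does not reach $\tau_1$. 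Once the half-split triples are in hand, the entire region decomposition becomes unnecessary, which is precisely what makes the paper's constant $c$ trivially independent of $J$.
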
  
\begin{proof}
We first set 
$$
 q_0(t):= \left( \sum_{k=0}^{[\alpha]} a_k t^{2k}\right) +\frac{1}{2}  a_{[\alpha]+1} t^{2([\alpha]+1)}$$%
for all $t\geq 0$.
By  positivity of coefficients $a_k$, $k=0,\dots,[\alpha]+1$ (see  \eqref{eq:coefficients}), we have
\begin{equation}\label{eq:lower_bound}
q_0(t) \geq c(1+t ^{d+2 }) 
\end{equation}
for $t\geq 0$. 
For $j=1,\dots,J$, we set 
\begin{equation*}
\begin{split}
q_j(t):=\frac{1}{2} a_{[\alpha]+2j-1} t^{2([\alpha]+2j-1)} + a_{[\alpha]+2j} t^{2([\alpha]+2j)} +\frac{1}{2} a_{[\alpha]+2j+1} t^{2([\alpha]+2j+1)} \end{split}.
\end{equation*}
The sum  in  \eqref{partial} has then the  expression 
$$
 \sum_{k=0}^K a_k \kappa^{2(\alpha-k)} |\xi| ^{2k}   = \kappa ^{2\alpha }  \left(  \sum_{j=0}^{J}q_j(\kappa^{-1}|\xi| )\right)
 +  \frac{1}{2}  a_{[\alpha]+2J+1} \kappa^{2\alpha} (\kappa^{-1}|\xi|)^{2([\alpha]+2J+1)},
$$
where also $ a_{[\alpha]+2J+1} >0$.
 From \eqref{eq:coefficients}, we observe that  for $j=1,\dots,J$
\begin{multline}\label{eq:for_limit}
q_j(t) =     \frac{1}{2} a_{[\alpha]+2j-1} t^{2([\alpha]+2j-1)}  \\
 \times \left(1+ \frac{2(\alpha - [\alpha]-2j+1) t^2}{ [\alpha]+2j} +
\frac{(\alpha-[\alpha] -2j ) (\alpha -[\alpha]-2j+1)t^4}{ ([\alpha] + 2j)([\alpha]+2j+1)} \right).
\end{multline}
The only real zero of $q_j$, $j=1,\dots,J$,  is at zero, since
the discriminant for the quadratic factor  in $q_j(\sqrt t)$  is negative. Indeed,  
\begin{equation} 
\begin{split}
D &= 4 \left(\frac{\alpha- [\alpha]- 2j+1}{[\alpha]+2j}  \right)^2 - 4 \frac{(\alpha- [\alpha]- 2j))(\alpha- [\alpha]- 2j +1)}{
([\alpha]+2j)([\alpha]+2j+1)}\\
&=  \frac{4 (\alpha- [\alpha]- 2j) +1)}{([\alpha]+2j)^2([\alpha]+2j+1)}  \left(\alpha +1  \right),
\end{split}
\end{equation}
where $\alpha-[\alpha]-2j + 1<0$ for $j\geq 1$. Moreover,  we see that $q_j\geq 0$ by inspecting the signs of the coefficients of 
$q_k$.
For the limit \eqref{eq:limit}, we note that by \eqref{eq:for_limit}, 
the ratio of the consecutive terms has the limit
\begin{equation}
\begin{split}
\lim_{j \rightarrow \infty } \frac{q_{j+1}(t)}{q_{j}(t)}  &= 
\lim_{j\rightarrow \infty} \frac{a_{[\alpha]+2j+1}}{a_{[\alpha]+2j-1}} t^2 
\frac{ \left(1+ \frac{2(\alpha - [\alpha]-2j-1) t^2}{ [\alpha]+2j-2} +
\frac{(\alpha-[\alpha] -2j-2 ) (\alpha -[\alpha]-2j-1)t^4}{ ([\alpha] + 2j-2)([\alpha]+2j-1)} \right) 
}{ \left(1+ \frac{2(\alpha - [\alpha]-2j+1) t^2}{ [\alpha]+2j} +
\frac{(\alpha-[\alpha] -2j ) (\alpha -[\alpha]-2j+1)t^4}{ ([\alpha] + 2j)([\alpha]+2j+1)} \right) 
}\\
&= t^2,
\end{split}
 \end{equation}
 which shows that the corresponding series diverges for $t>1$. Since the series is a sum of non-negative terms,
 it is unbounded.
\end{proof}

When $K=[\alpha]+2J+1$,   the truncated Taylor approximation 
$$
C_X^{K} (x,y) =  \frac{\sigma^2}{(2\pi)^d } \int   \frac{\exp\left(-i\left(x-y\right)\cdot\xi \right)}{\sum_{k=0}^{K}  a_k \kappa^{2(\alpha- k)} |\xi|^{2k}}d\xi,
$$ 
is a well-defined  function by \eqref{partial}. Moreover, the truncated Taylor approximation
$C_X^{K}$ satisfies  the equation
\begin{equation}\label{eq:trunc_cov}
 \sum_{k=0}^{K}  a_k \kappa^{2(\alpha- k)}\Delta ^{k}_x  C_X^{K} (x,y) = - \delta_{y}
\end{equation}
in the sense of tempered  distributions. 
Taking the Fourier transform and dividing by the positive term $\sum_{k=0}^{K}  a_k \kappa^{2(\alpha- k)}|\xi|  ^{k}$ 
leads to the equation.

We proceed  to  study convergence of covariances when  using  truncated Taylor approximations.
The following theorem demonstrates that   restrictions on the spectral domain are not required 
when using the Taylor  approximations. This is a significant benefit for the numerical approach 
in terms of computational speed.
 \begin{theorem}
 Let $K_J=[\alpha]+2J+1$ for $J\in\mathbb N$. 
The approximations 
$$
C_X^{K_J} (x,y) =  \frac{\sigma^2}{(2\pi)^d } \int   \frac{\exp\left(-i\left(x-y\right)\cdot\xi \right)}{\sum_{k=0}^{K_J}  a_k \kappa^{2(\alpha- k)} |\xi|^{2k}}d\xi
$$
converge uniformly to 
$$
C_X(x,y)= \frac{\sigma^2}{(2\pi)^d } \int_{|\xi|\leq \kappa}   \frac{\exp\left(-i\left(x-y\right)\cdot\xi \right)}{(\kappa^2+ |\xi|^2 )^\alpha}d\xi
$$
as $J\rightarrow \infty$. 
\end{theorem}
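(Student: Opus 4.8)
The plan is to estimate the difference $C_X^{K_J}(x,y)-C_X(x,y)$ by a quantity that does not depend on $(x,y)$ and then show that this quantity tends to zero. Writing the denominators $D_{K_J}(\xi):=\sum_{k=0}^{K_J} a_k\kappa^{2(\alpha-k)}|\xi|^{2k}$ and using $|\exp(-i(x-y)\cdot\xi)|=1$, I would bound
\begin{equation*}
|C_X^{K_J}(x,y)-C_X(x,y)|\le \frac{\sigma^2}{(2\pi)^d}\left(\int_{|\xi|\le\kappa}\left|\frac{1}{D_{K_J}(\xi)}-\frac{1}{(\kappa^2+|\xi|^2)^\alpha}\right|d\xi+\int_{|\xi|>\kappa}\frac{d\xi}{D_{K_J}(\xi)}\right).
\end{equation*}
Since $C_X$ only integrates over $|\xi|\le\kappa$ while $C_X^{K_J}$ integrates over all of $\mathbb R^d$, this is exactly the natural split of the difference. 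The right-hand side is independent of $(x,y)$, so it suffices to show that each of the two integrals vanishes as $J\to\infty$; this delivers uniform convergence on all of $\mathbb R^d\times\mathbb R^d$ simultaneously.

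Both pieces I would handle by dominated convergence, with the single majorant coming from the lower bound \eqref{partial} of Lemma \ref{raj}, namely $D_{K_J}(\xi)\ge c(1+|\xi|^{d+2})$ with $c$ independent of $J$. This gives $1/D_{K_J}(\xi)\le 1/(c(1+|\xi|^{d+2}))$ for every $J$, and the right side is integrable over $\mathbb R^d$ because $d+2>d$ (in polar coordinates the tail behaves like $r^{-3}$). On the ball $|\xi|\le\kappa$ the first integrand is additionally bounded by $1/\kappa^{2\alpha}+1/(c(1+|\xi|^{d+2}))$, which is integrable on that bounded set, so it is dominated there as well.

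For the pointwise limits I would invoke the two regimes of Lemma \ref{raj}. On $\{|\xi|\le\kappa\}$ the Taylor series \eqref{tayl} converges, so $D_{K_J}(\xi)\to(\kappa^2+|\xi|^2)^\alpha$ for each $\xi$ (the boundary sphere $|\xi|=\kappa$ is a null set, so its behaviour is irrelevant), whence the first integrand tends to $0$. On $\{|\xi|>\kappa\}$, setting $t=\kappa^{-1}|\xi|>1$, the divergence statement \eqref{eq:limit} together with the decomposition $D_{K_J}(\xi)\ge\kappa^{2\alpha}\sum_{j=1}^J q_j(\kappa^{-1}|\xi|)$ established inside the proof of Lemma \ref{raj} forces $D_{K_J}(\xi)\to+\infty$, so $1/D_{K_J}(\xi)\to 0$. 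Applying dominated convergence to each integral then drives the whole bound to zero.

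The main obstacle is securing uniform-in-$J$ domination: the argument works only because the constant $c$ in Lemma \ref{raj} is independent of $J$, which lets a single integrable majorant $1/(c(1+|\xi|^{d+2}))$ control every $C_X^{K_J}$ at once. Without this $J$-independence one could not legitimately pass the limit under the integral on the unbounded region $|\xi|>\kappa$, where the bandwidth of the integrand grows with $J$. Once the pointwise limits from Lemma \ref{raj} are in hand, the remainder is routine.
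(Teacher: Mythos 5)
Your proof is correct and follows essentially the same route as the paper: the identical split of the error into the ball $|\xi|\le\kappa$ and its complement, with Lemma \ref{raj} supplying both the $J$-independent lower bound \eqref{partial} (hence the single integrable majorant $1/(c(1+|\xi|^{d+2}))$) and the divergent minorant $\sum_{j=1}^J q_j$ on the tail. The only immaterial difference is that on the ball you invoke dominated convergence where the paper instead bounds the Taylor remainder $R_K$ explicitly and uniformly; your softer argument reaches the same conclusion, and you usefully make explicit the $J$-uniform domination that the paper's treatment of the tail integral leaves implicit.
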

\begin{proof}
For simplicity, take $\kappa=\sigma=1$.  Denote  $K=[\alpha]+2J+1$ and set
\begin{equation}\label{eq:estimate}
\begin{split}
e_{K}:=&\sup_{x,y}  \left| (2\pi)^d  C_X (x,y)-  \int   \frac{\exp\left(-i\left(x-y\right)\cdot\xi \right)}{\sum_{k=0}^K  a_k   |\xi|^{2k}}d\xi  \right | \\
\leq &   \int_{|\xi|\leq  1 }    \left| 
\frac{1}{\sum_{k=1}^\infty   a_k   |\xi|^{2k}  }-\frac{1}{\sum_{k=1}^K   a_k   |\xi|^{2k}  } \right| d\xi  +  
\int _{|\xi|\geq 1  }  \left|   \frac{1}{\sum_{k=0}^K  a_k  |\xi|^{2k}} \right| d\xi \\
= &  \int_{|\xi|\leq  1 }    \left| 
\frac{ \sum_{k=K+1}^\infty   a_k   |\xi|^{2k}  }{\left(  \sum_{k=1}^{\infty}   a_k   |\xi|^{2k}\right)\left( \sum_{k=1}^K   a_k   |\xi|^{2k} \right) } \right| d\xi  + 
\int _{|\xi|\geq 1  }  \left|   \frac{1}{\sum_{k=0}^K  a_k  |\xi|^{2k}} \right| d\xi. \\
\end{split}
\end{equation}

The remainder term of the Taylor approximation for $(1+t ^2)^\alpha$   at $t=|\xi|\leq 1$  is 
\begin{equation}
\begin{split}
R_K (|\xi|^2)\leq&  \int_ 0^{|\xi|^2}  
\frac{|\alpha(\alpha-1)\dots  (\alpha-K)|  }{K!} \times     (|\xi|  ^2 - s )^K (1+s)^{\alpha-K-1} ds\\
\leq&  \left|\frac{ |\alpha(\alpha-1)\dots  (\alpha-K+1)|  }{K!  }  (2^{\alpha-K}-1)\right|
\end{split}
\end{equation}
which converges uniformly.  
 Together with the lower bound \eqref{partial}, this bounds the first integral
  in \eqref{eq:estimate}.

For the second integral, we use from Lemma \ref{raj}, the lower bound 
\begin{equation}\label{eq:rhs}
\sum_{k=0}^{[\alpha]+2J+1 } a_k  |\xi|   ^{2k} \geq \sum_{j=1}^J  q_j(|\xi| ), 
\end{equation}
and the limit \eqref{eq:limit}.  Hence,  the second integral vanishes 
as $J\rightarrow \infty$.
\end{proof}

\section{Convergence of the discrete field to continuous}
\label{sec:convergence}

In this section, we define the lattice approximations of the random field $X$ similarly as in 
\cite{Roininen2011}. 
We choose the discrete lattice  to be  $h\mathbb{Z}^d$, where $h>0$.
The continuous field $X$ is first  restricted onto $h\mathbb {Z}^d$ and then approximated by a
discrete field $X^{(h)}$ on $h\mathbb {Z}^d$.
 
We start by discretising  the Laplacian in \eqref{eq:trunc_cov}. 
As the discretisation scheme, we use finite-difference methods. 
That is,  for $d=1$ the  discrete Laplacian is
$$
(\Delta_h  f)(hi) = \frac{f(h(i-1)) -2 f(hi)+ f(h(i-1))}{h^2},  
$$
where $\; i\in \mathbb Z$ and $f: h\mathbb Z \rightarrow \mathbb R$.
Similarly, in dimension $d=2$
\begin{equation*}
\begin{split}
(\Delta_h f) (hi,hj) = & h^{-2}(- 4 f(hi,hj)  +  f(h(i-1),hj)   \\
&  + f (hi,h(j-1)) + f(hi,h(j+1)) + f(h(i+1),hj)),    
\end{split}
\end{equation*}
where $i,j\in\mathbb Z$ and $f: h\mathbb Z^2 \rightarrow \mathbb R$.

We define  the lattice approximation  $X^{(h)}$ as a zero mean Gaussian random field 
on $h\mathbb Z^d$ whose stationary covariance $ C_{X^{(h)}}(h\mathbf{ i}, h\mathbf{j}))=C_{X^{(h)}}(h (\mathbf{i-j})),  \; \mathbf {i},\mathbf {j}\in \mathbb Z^d$  is given by a  discrete version of \eqref{eq:trunc_cov}:
\begin{equation}\label{covar}
  \sum_{k=0}^{K}  a_k \frac{ h^{d}}{\sigma^2}\kappa^{2(\alpha- k)}\Delta ^{k}_{h}  C_{X^{(h)}} (h\mathbf i, h\mathbf j) = -  \delta_{\mathbf{i j}}
\end{equation}
where  $\delta_{\mathbf{ij}}$ is the Kronecker delta function on $\mathbb Z^d$, and the discrete Laplacian operates on 
the first variable. The multiplier $h^d$ on the left hand side is connected to the convergence of the discretization. Namely, it  distinguishes the lattice approximations of continuous integral operators from  their kernels, which are studied   here.  In \cite{Roininen2011}, this multiplier is included in the construction of the discrete prior. 

Define the spectrum of $f:  h\mathbb Z^d\rightarrow \mathbb R$ as 
\begin{equation*}
S_f (\xi )=\sum_{\mathbf i\in \mathbb Z^d }  f(h\mathbf i) \exp(  i\, \mathbf {i} \cdot \xi),
\end{equation*}
for all $\xi \in (-\pi,\pi)^d$.  Then 
\begin{equation*}
f(h\mathbf i)= \frac{1}{(2\pi)^d} \int_{(-\pi,\pi)^d}  S_f(\xi ) \exp( -   i\, \mathbf {i}\cdot \xi ) d\xi.
 \end{equation*}
Next, we consider the discrete Laplacian as a Fourier multiplier:
\begin{equation*}
  \widehat {(- \Delta_h f)}(\xi)=   \sum_{p=1}^d \frac{2-2 \cos( \xi_p )}{h^2} \widehat f (\xi),
\end{equation*}
and transform \eqref{covar} into
\begin{equation}\label{eq:covar2}
\bigg( \frac{h^{d} }{\sigma^2}\sum_{k=0}^{K}  a_k \kappa^{2(\alpha- k)}
 \sum_{p =1}^d  h^{-2k}(2- 2\cos(\xi_p))^k   \bigg)\widehat C_{X^{(h)}} (\xi) =1.
\end{equation}
Since $2-2\cos(\xi_p)\geq 0$, Lemma \ref{raj} implies the positivity of the 
multiplier of $\widehat C_{X^{(h)}} (\xi) $ in \eqref{eq:covar2}.
The spectrum of $C_{X^{(h)}}$  is then obtained from \eqref{eq:covar2} as
\begin{equation*}
\begin{split}
\widehat C_{X^{(h)}}(\xi)=&   \frac{\sigma^2} {h^{d}} \bigg( \sum_{k=0}^{K}  a_k \kappa^{2(\alpha- k)}
 \sum_{p =1}^d  h^{-2k}(2- 2\cos(\xi_p))^k   \bigg)^{-1}.
\end{split}
\end{equation*}
The corresponding discrete correlation is
\begin{equation}\label{eq:covar3}
\begin{split}
C_{X^{(h)}}(h \mathbf {i})= \frac{1}{(2\pi)^d}\int_{(-\pi,\pi)^d}    \exp (- i \,    \mathbf i\cdot \xi )  \widehat C_{X^{(h)}}(\xi) d\xi . 
\end{split}
\end{equation}

Let us make a change of variables in the integral \eqref{eq:covar3}:
\begin{equation}\label{eq:denominator} 
C_{X^{(h)}}(h \mathbf i )= \frac{\sigma^2}{(2\pi)^d }\int_ {\left(-\frac{\pi }{h}, \frac{\pi }{h}\right)^d}  
\frac{  \exp (- i     h   \mathbf i\cdot  \xi  ) }{   
   \sum_{k=0}^{K}  a_k \kappa^{2(\alpha- k)}
 \sum_{p =1}^d  h^{-2k}(2- 2\cos(\xi_p))^k  } d\xi. 
\end{equation}
For a  given $x\in \mathbb R^d$, we  choose such a sequence of  $\mathbf i =\mathbf  {i}_{h}$ that $h\mathbf {i}_h \rightarrow  x$ as  $h\rightarrow 0$, and apply Lebesgue's dominated convergence theorem for \eqref{eq:denominator} as 
$h\rightarrow 0$.  Indeed, 
by  Jordan's  inequality,  
the lower bound 
\begin{equation*}
h^{-2}(2-2\cos(\xi_p/h))= 4 h^{-2 }\sin^2(\xi_p/2h)\geq 4 \xi_p^2 /\pi^2
\end{equation*}
holds, which  together with Lemma \ref{raj}   implies
 that the denominator in \eqref{eq:denominator} has the lower bound 
\begin{equation*}
\begin{split}
 \bigg( \sum_{k=0}^{K}  a_k \kappa^{2(\alpha- k)}
 \sum_{p =1}^d  h^{-2k}(2- 2\cos(\xi_p/h))^k   \bigg)  \geq  c  (1+  |\xi|^{2d+2} ), 
\end{split}
\end{equation*}
where $c$ does not depend on $h$. Moreover,
\begin{equation*}
\lim_{h\rightarrow 0 }  h^{-2}(2-2\cos(h\xi')) =\xi'^2.
\end{equation*}
  We have shown the following result:
\begin{theorem}
Let $K$  and $a_k$ be as in Lemma \ref{raj}. Let
$$
C_{X^{(h)}}(h \mathbf i, h\mathbf j )=\frac{\sigma^2}{(2\pi)^d}  \int_{(-\pi,\pi)^d}    \frac{\exp( - i   ( \mathbf i-\mathbf j) \cdot \xi )}{h^{d} \sum_{k=0}^{K}  a_k \kappa^{2(\alpha- k)}
 \sum_{p =1}^d  h^{-2k}(2- 2\cos(\xi_p))^k   } d\xi,  \;  
 $$
for all $\mathbf i,\mathbf j\in \mathbb Z^d
$ and 
$$
C_X^K(x,y)= \frac{\sigma^2}{(2\pi)^d } \int   \frac{\exp\left(-i\left(x-y\right)\cdot\xi \right)}{\sum_{k=0}^{K}  a_k \kappa^{2(\alpha- k)} |\xi|^{2k}}d\xi, \; 
$$
for all $x,y\in\mathbb R^d$.
Then 
$$
\lim_{h\rightarrow 0} C_{X^{(h)}} (h\mathbf {i}_h, h\mathbf {j}_h) =C_X (x,y)
$$
whenever sequences $(\mathbf i_h)$ and $(\mathbf j_h)$ are such that  $\lim_{h\rightarrow 0} (h\mathbf i_{h}, h\mathbf j_{h}) = (x,y)$.
\end{theorem}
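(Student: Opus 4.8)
The plan is to place both covariances on the common domain $\mathbb R^d$ and then to exchange limit and integral by Lebesgue's dominated convergence theorem. I would first perform the substitution $\xi\mapsto h\xi$ in the defining integral of $C_{X^{(h)}}(h\mathbf i,h\mathbf j)$; this turns the integral over $(-\pi,\pi)^d$ into the expression \eqref{eq:denominator} over $(-\pi/h,\pi/h)^d$, the Jacobian $h^d$ cancelling the factor $h^d$ in the denominator. Writing the integrand against the indicator $\mathbf 1_{(-\pi/h,\pi/h)^d}(\xi)$ then expresses every $C_{X^{(h)}}(h\mathbf i_h,h\mathbf j_h)$ as an integral over all of $\mathbb R^d$, on the same footing as the target $C_X^K(x,y)$, so that a single dominated-convergence argument applies.

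Next I would check pointwise convergence of the integrands for fixed $\xi\in\mathbb R^d$. For $h$ small the indicator equals $1$; the numerator $\exp(-ih(\mathbf i_h-\mathbf j_h)\cdot\xi)$ tends to $\exp(-i(x-y)\cdot\xi)$ since $h\mathbf i_h\to x$ and $h\mathbf j_h\to y$ by hypothesis; and for each $p$ and $k$ the elementary limit $h^{-2}(2-2\cos(h\xi_p))=4h^{-2}\sin^2(h\xi_p/2)\to\xi_p^2$, recorded just before the statement, gives $h^{-2k}(2-2\cos(h\xi_p))^k\to\xi_p^{2k}$, so the discrete denominator converges to the denominator of $C_X^K$, which is strictly positive by Lemma \ref{raj}. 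Hence the discrete integrand converges pointwise to the integrand of $C_X^K$.

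The \emph{main obstacle} is producing an $h$-independent integrable majorant. The numerator has modulus at most $1$, so everything reduces to a uniform lower bound for the denominator. Here I would apply Jordan's inequality in the form $h^{-2}(2-2\cos(h\xi_p))=4h^{-2}\sin^2(h\xi_p/2)\geq 4\xi_p^2/\pi^2$, which is valid exactly on the integration range $|\xi_p|\leq\pi/h$ and therefore uniform in $h$. Inserting this into the discrete denominator and comparing the resulting polynomial in $\xi$ with the positive-coefficient structure underlying the lower bound \eqref{partial} of Lemma \ref{raj} yields, as recorded before the statement, a lower bound of the form $c(1+|\xi|^{2d+2})$ with $c$ independent of $h$. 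Since $2d+2>d$, the function $1/\bigl(c(1+|\xi|^{2d+2})\bigr)$ is integrable on $\mathbb R^d$ and dominates every discrete integrand.

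With pointwise convergence and this uniform integrable majorant in place, Lebesgue's dominated convergence theorem gives $\lim_{h\to 0}C_{X^{(h)}}(h\mathbf i_h,h\mathbf j_h)=C_X^K(x,y)$, the asserted identity. The delicate point to watch is that Jordan's inequality must deliver a bound that is at once uniform in $h$ and has decay faster than $|\xi|^{-d}$; the positivity of the leading Taylor coefficients established in Lemma \ref{raj} is precisely what secures this, and it is also what allows the continuous integral to be taken over all of $\mathbb R^d$ rather than the band $|\xi|\leq\kappa$.
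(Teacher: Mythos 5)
Your proof is correct and takes essentially the same route as the paper's: the change of variables to $\left(-\frac{\pi}{h},\frac{\pi}{h}\right)^d$ (the paper's display \eqref{eq:denominator}), pointwise convergence of the integrand via $h^{-2}(2-2\cos(h\xi'))\rightarrow \xi'^2$, and Lebesgue's dominated convergence theorem with an $h$-independent majorant obtained from Jordan's inequality combined with the lower bound of Lemma \ref{raj}. The only differences are presentational: you make explicit the indicator-function device that places all integrals on $\mathbb R^d$ and the convergence of the numerator, both of which the paper leaves implicit.
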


Pointwise convergence of covariance functions shows
that corresponding finite-dimensional Gaussian distributions converge 
weakly in the sense of measures, that is,  expectations of bounded continuous functions  converge. 
In applications, more is often needed. Namely,    mappings  $H$ defined on paths of  random fields 
often appear, for example,  in Bayesian inverse problems.   This raises the need to  study convergence of 
 interpolated  random fields.  The domain of definition of the continuous mapping $H$
 usually  dictates the function space and topology in which  the convergence is to be studied. Below, we demonstrate 
 one result of this kind.

We show that  Whittaker--Shannon interpolated  random fields    converge weakly in the sense of measures on   $C(\mathbb R^d )$.   Here the space 
$C(\mathbb R^d)$ is  equipped with the usual metric topology corresponding to the family of seminorms $\vert f \vert_k :=\sup_{x\in K_k} \vert  f(x)\vert $, where the union of compact sets $K_k\subset \mathbb R^d $ equals  $\mathbb R^d$.

 Set
$$
C_{X^{(h)}}(x,y)=C_{X^{h}}(x-y)=\sum_{\mathbf j\in \mathbb Z^d} C_{X^{(h)}}(h(\mathbf{j})) \prod_{p=1}^d  \frac{\sin( \pi ( h^{-1} (x_p-y_p) -  j_p  ))}{ \pi( h^{-1}(x_p-y_p) - j_p)  }
$$
for all $x,y\in \mathbb R^n$.  Then 
\begin{eqnarray*}
\begin{split}
\widehat C_{X^{(h)}}(\xi) =& \sum_{\mathbf j_\in\mathbb Z}  C_{X^{(h)}} (h\mathbf {j}) \exp( i 
     h  \mathbf {j}\cdot \xi )   h^{d} 1_ {[-\pi ,\pi  ]^d} (h\xi  )\\
 =&   \sigma^2  \bigg( \sum_{k=0}^{K}  a_k \kappa^{2(\alpha- k)}
 \sum_{p =1}^d  h^{-2k}(2- 2\cos( h \xi_p))^k   \bigg)^{-1}  1_ {[-\pi ,\pi  ]^d} (h\xi  )
 \end{split}
\end{eqnarray*}
which  is equivalent to
$$
C_{X^{(h)}} (x)=  \frac{\sigma^2}{(2\pi)^d}  \int_{\left(-\frac{\pi}{h},\frac{\pi}{h}\right)^d}    \frac{\exp( - i    x \cdot \xi )}{ \sum_{k=0}^{K}  a_k \kappa^{2(\alpha- k)}
 \sum_{p =1}^d  h^{-2k}(2- 2\cos(\xi_p))^k   } d\xi.
 $$
 The weak convergence of $X^{(h_n)}$ to $X$ when  $h_n\rightarrow 0$ as $n\rightarrow \infty$ follows from the next result.
  The same result also shows   the weak convergence of random fields when  the truncation parameter grows.
\begin{theorem}
Let  $X_n$ be  a sequence of zero mean Gaussian  random fields on 
$\mathbb R^d$,   whose covariance functions 
$C_n(x,y)=C_n(x-y)$ satisfy 
\begin{equation}\label{covo}
\vert  \widehat C_n(\xi)\vert  \leq \frac{c}{1+|\xi|^{2s}},
\end{equation}
for some constants $c>0$, $s>(d+1)/2$  and for all $n$.   
If $$\lim_{n\rightarrow \infty }  C_n (x,y)= C(x,y)$$ for all $x,y\in \mathbb R^d$,
then  the distributions of $X_n$  converge weakly on $C(\mathbb R^d)$ to the  distribution of Gaussian zero mean random field $X$ whose  covariance function  is $C(x,y)$.
\end{theorem}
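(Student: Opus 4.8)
The plan is to establish weak convergence on $C(\mathbb R^d)$ through the standard two-part criterion: convergence of the finite-dimensional distributions together with tightness of the sequence $(X_n)$. Since $C(\mathbb R^d)$ equipped with the seminorms $\vert f\vert_k=\sup_{x\in K_k}\vert f(x)\vert$ is a Polish space and the cylinder functionals form a convergence-determining class, these two ingredients imply $X_n\Rightarrow X$ via Prohorov's theorem. Before carrying this out I would first record that the limit $C$ is a legitimate covariance: each $C_n$ is positive semidefinite, so the pointwise limit $C$ is positive semidefinite as well, and the uniform spectral bound \eqref{covo} will be shown to force a uniform Lipschitz (hence equicontinuity) estimate on the $C_n$, so that $C$ is continuous. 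Consequently a zero-mean Gaussian field $X$ with covariance $C$ exists, and the content of the theorem is that $X_n$ converges to it weakly on the path space.

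For the finite-dimensional distributions, fix points $x_1,\dots,x_m\in\mathbb R^d$. The vector $(X_n(x_1),\dots,X_n(x_m))$ is centered Gaussian with covariance matrix $[C_n(x_i-x_j)]_{i,j}$. By hypothesis $C_n\to C$ pointwise, so these matrices converge entrywise to $[C(x_i-x_j)]_{i,j}$, and convergence of the covariance matrices implies weak convergence of the centered Gaussian vectors (their characteristic functions converge pointwise). This identifies every finite-dimensional limit law as the centered Gaussian with covariance $C$, that is, as the corresponding finite-dimensional distribution of $X$.

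The main work, and the step I expect to be the real obstacle, is tightness, and this is where the exponent condition $s>(d+1)/2$ enters. It guarantees that $\int_{\mathbb R^d}\vert\xi\vert\,\vert\widehat C_n(\xi)\vert\,d\xi\le \int_{\mathbb R^d}\frac{c\,\vert\xi\vert}{1+\vert\xi\vert^{2s}}\,d\xi=:M<\infty$ with $M$ independent of $n$. Using $1-\cos t\le\vert t\vert$ and the spectral representation of the increment variance,
\[
\mathbb E\,\vert X_n(x)-X_n(y)\vert^2=\frac{2}{(2\pi)^d}\int_{\mathbb R^d}\bigl(1-\cos((x-y)\cdot\xi)\bigr)\,\widehat C_n(\xi)\,d\xi\le \frac{2M}{(2\pi)^d}\,\vert x-y\vert,
\]
a Lipschitz increment bound uniform in $n$. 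Gaussianity upgrades this to every even moment via $\mathbb E\,\vert X_n(x)-X_n(y)\vert^{2m}=(2m-1)!!\,\bigl(\mathbb E\,\vert X_n(x)-X_n(y)\vert^2\bigr)^m\le C_m\,\vert x-y\vert^{m}$, and choosing $m>d$ makes the exponent exceed the dimension. By the Kolmogorov--Chentsov continuity theorem applied uniformly in $n$, the fields admit continuous modifications that are H\"older continuous with a common exponent, and the associated modulus-of-continuity estimates are uniform in $n$.

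Finally I would convert these uniform estimates into tightness. On each compact exhausting set $K_k$, the uniform modulus-of-continuity bound together with the uniform bound on the one-point variance $\mathbb E\,\vert X_n(x_0)\vert^2=C_n(0)$ yields tightness of the laws of $X_n|_{K_k}$ in $C(K_k)$, by the multiparameter Arzel\`a--Ascoli characterisation of compact sets. Passing from tightness on every $C(K_k)$ to tightness on the Fr\'echet space $C(\mathbb R^d)$ is then a diagonal argument over the countable family of seminorms, distributing an $\varepsilon/2^k$ budget across the levels; this is the delicate bookkeeping step, since $C(\mathbb R^d)$ is not locally compact. Combining tightness with the finite-dimensional convergence from the second paragraph gives $X_n\Rightarrow X$ on $C(\mathbb R^d)$, and because the finite-dimensional limits are exactly the Gaussian laws with covariance $C$, the limit is the claimed field $X$. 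The only genuinely technical point throughout is keeping all chaining constants independent of $n$, which the $n$-independent bound $M$ secures.
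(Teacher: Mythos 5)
Your proposal is correct and follows essentially the same route as the paper: finite-dimensional (characteristic-function) convergence from $C_n\to C$, plus tightness via the uniform increment bound $\mathbf E\,\vert X_n(x)-X_n(y)\vert^{2P}\leq C\vert x-y\vert^P$ with $P>d$, derived from the spectral estimate $\int \vert\xi\vert\,\vert\widehat C_n(\xi)\vert\,d\xi<\infty$ (which is exactly where $s>(d+1)/2$ enters), followed by the Kolmogorov--Chentsov tightness criterion. The only difference is bookkeeping: where you spell out the Gaussian moment identity, the Arzel\`a--Ascoli/diagonal argument on the exhausting compacts, and the one-point tightness via the variance bound, the paper delegates these steps to citations (Kallenberg, Cor.\ 16.9 for tightness on path space; Bogachev, Cor.\ 3.8.5 for combining tightness with convergence of characteristic functions).
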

\begin{proof}

The  random fields $X_n$   have continuous sample paths by \eqref{covo} and   Theorem 3.4.3 in \cite{Adler1981}. 
Therefore, their distributions are Gaussian measures on $C(\mathbb R)$, whose characteristic functions   $\mathbf E\left( \exp(i t X_n(x)\right)$ converge to
the characteristic function of  the zero mean Gaussian measure  with covariance function
$C(x,y)$. 

Since the probability density of $X_n(0)$ converges to the probability density of $X(0)$, the sequence $\{X_n(0)\}$ is tight. 
Moreover,
\begin{equation}
\begin{split}
\mathbf E\left( |X_n(x)-X_n(y)|^{2P} \right) =& (2 C_n(0)-2 C_n(x-y))^P\\
= &\left(\frac{2}{(2\pi)^d}\int  (1-  \exp(-i(x-y)\cdot \xi) \widehat C(\xi) d\xi\right)^P\\
\leq & \left(\frac{2}{(2\pi)^d}\int  |x-y||\xi|  | \widehat C_n(\xi) |d\xi \right)^P\\
\leq & C |x-y|^P.    
\end{split}
\end{equation}
Choosing $P>d$ allows application of Kolmogorov-Chentsov tightness 
criterion (see Corollary 16.9 in \cite{Kallenberg}).  
Uniform tightness and the convergence of the characteristic functions
imply the weak convergence (see Corollary 3.8.5 in \cite{Bogachev1998}). 
 
\end{proof}

\section{Numerical experiments}
\label{sec:experiments}
In this section we test the numerical accuracy of 2-dimensional Mat\'ern field approximation with $\sigma^2 = 1$, $\kappa = 1$, and $\alpha = \pi$. Figure~\ref{fig:err_2d} shows the maximum absolute approximation error as function of the Taylor series order. The discretisation step was $h = 0.1$. It can be seen that the error first decreases and then starts to increase. This is to be expected, because at the first steps the approximation becomes better in the central part of the integral while the tails still remain quite heavy. However, when the Taylor series order is increased, the tails become thinner  and hence the approximation error increases and approaches the band-limited covariance function.

\begin{figure}[thp]
\begin{center}
  \includegraphics[width=0.7\textwidth]{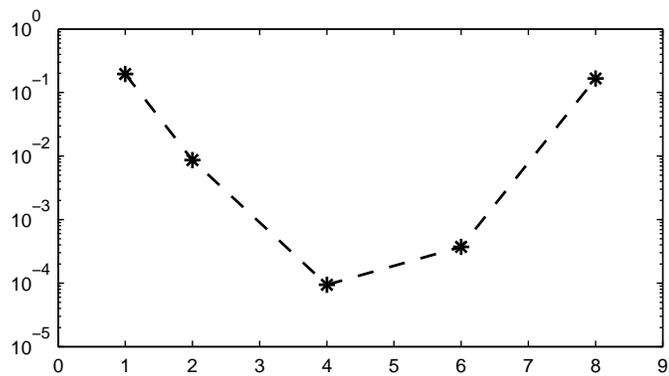}
  \caption{Maximum absolute error in the approximate covariance function with $h = 0.1$ as function of Taylor series order. The minimum error is obtained with order $K = 4$.
  } \label{fig:err_2d}
  \end{center}
\end{figure}

Figure~\ref{fig:approx_2d} shows the approximation with the Taylor series of order $K = 4$ and Figure~\ref{fig:slice_2d} a one-dimensional slice extracted from the middle of the covariance function. It can be seen that the error in the approximation is very small.

\begin{figure}[thp]
\begin{center}
  \subfigure[Exact]{\includegraphics[width=0.49\textwidth]{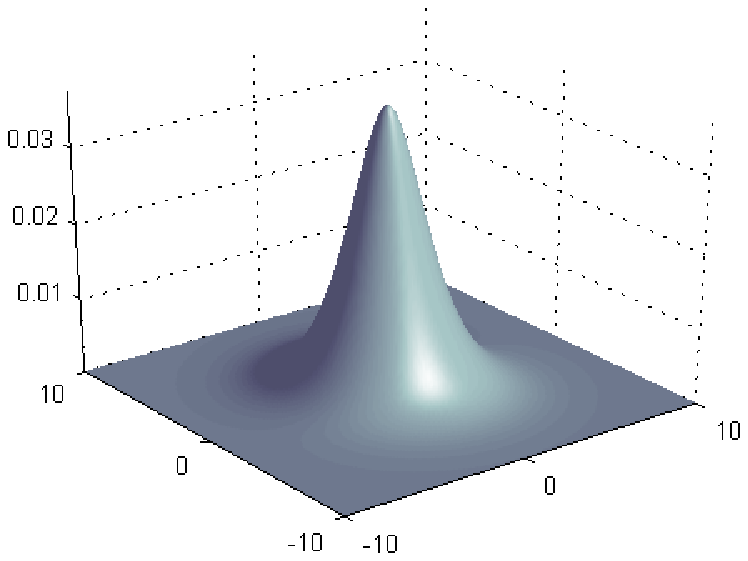}}
  \subfigure[Approximate]{\includegraphics[width=0.49\textwidth]{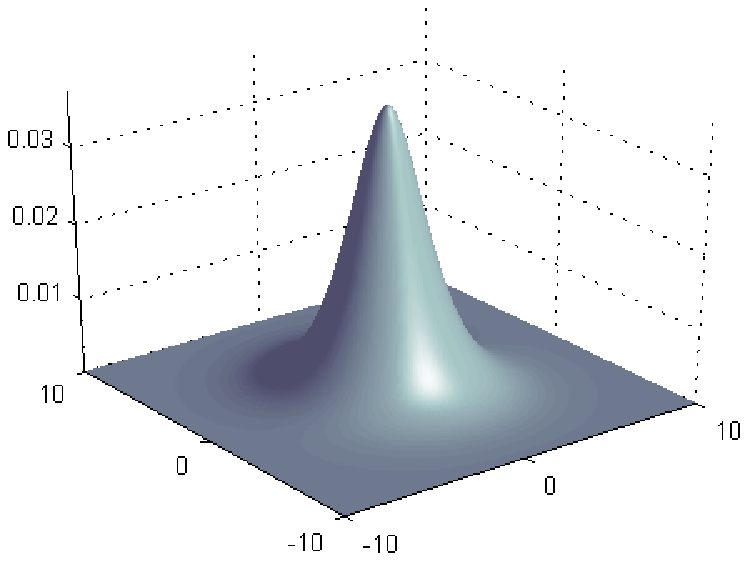}}
  \caption{(a) Exact covariance function. (b) Approximate covariance function with Taylor series of order $K = 4$ and discretisation step $h = 0.1$. The covariance functions are practically indistinguishable.
  } \label{fig:approx_2d}
  \end{center}
\end{figure}

\begin{figure}[htp]
\begin{center}
  \includegraphics[width=0.7\textwidth]{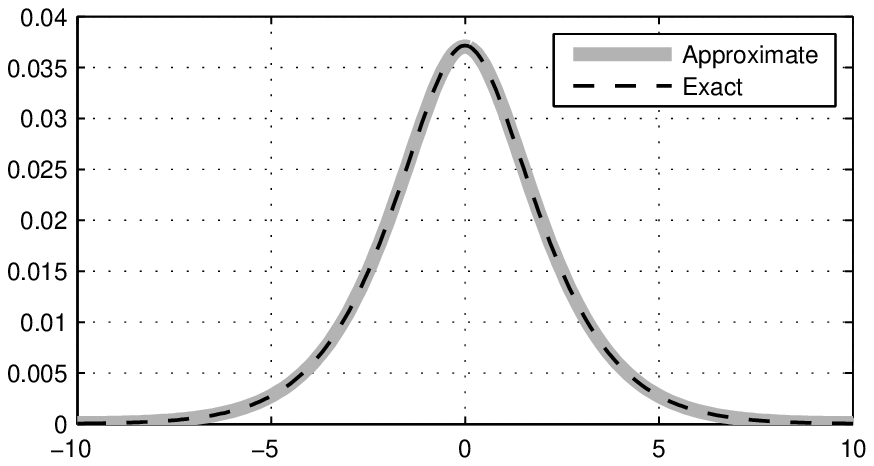}
  \caption{One-dimensional slice from the covariance function in Figure~\ref{fig:approx_2d}. The error in the covariance function is negligible.
  } \label{fig:slice_2d}
  \end{center}
\end{figure}

Finally, Figure~\ref{fig:real_2d} shows an example realisation of the process which is very fast to compute despite the relatively large number of discretisation points (40401).

\begin{figure}[htp]
\begin{center}
  \includegraphics[width=0.7\textwidth]{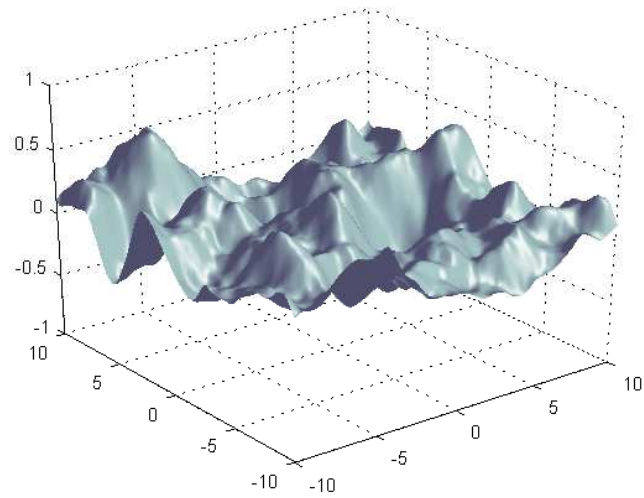}
  \caption{Realisation from the approximate two-dimensional field.
  } \label{fig:real_2d}
  \end{center}
\end{figure}

\section{Conclusion}

We have considered approximation methods of  Mat\'ern fields.
The methodology is based on truncated Taylor expansion for the reciprocal of the power spectrum and Cholesky decomposition for practical computations.
We have shown the convergence of the discrete Mat\'ern field to the continuous ones in two cases, the first one is with respect to Taylor expansion and the second one with the discretisation step $h$.
There are truncation levels that give satisfactory approximations with sparse matrices for Mat\'ern fields.
We have demonstrated corresponding numerical examples.

As our focus was on the methodology, we have not presented any practical applications in this paper.
Hence, this will be one  task of subsequent studies.
Application areas include, for example, tomography within the framework of Bayesian statistical inverse problems or spatial interpolation in Bayesian statistics.

 \section*{Acknowledgements}

This work has been funded by Academy of Finland (project numbers 250215,  266940, and 273475).

\end{document}